\documentclass[a4paper,11pt]{article}

\mathcode`\:="603A     
\mathcode`\<="4268     
\mathcode`\>="5269     
\mathchardef\gt="313E  
\mathchardef\lt="313C  
\mathchardef\colon="303A  

\usepackage{amssymb,amsthm,amsmath}
\usepackage{geometry}
\geometry{a4paper,body={6 in,7 in}}
\usepackage{graphicx}
\usepackage[latin1]{inputenc}
\usepackage[small,nohug,heads=vee]{diagrams}
\diagramstyle[labelstyle=\scriptstyle]

\mathcode`\:="603A   
\def\colon{\mathrel:}

\theoremstyle{plain}
\newtheorem{theorem}{Theorem}[section]
\newtheorem{lemma}[theorem]{Lemma}
\newtheorem{proposition}[theorem]{Proposition}

\theoremstyle{definition}
\newtheorem{definition}[theorem]{Definition}

\author{Samuele Maschio}
\title{What is the real category of sets?}
\date{}
\begin{document}
\maketitle
\textbf{Abstract}\\
{\footnotesize According to Kreisel, category theory provides a powerful tool to organize mathematics. A sample of this descriptive power is given by the categorical analysis of the practice of "classes as shorthands" in ZF set theory. In this case category theory provides a natural way to describe the relation between mathematics and metamathematics: if metamathematics can be described by using categories (in particular syntactic categories), then the mathematical level is represented by internal categories. Through this two-level interpretation we can clarify the relation between classes and sets in ZF and, in particular, we can present two equivalent categorical notions of definable set. Some common sayings about set theory will be interpreted in the light of this representation, emphasizing the distinction between naive and rigorous sentences about sets and classes.}
\section{The spiny relation between categories and sets}
The relation between sets and categories is a spiny topic. The set theoretical foundations of category theory were discussed by many mathematicians, e.g.\;by Feferman \cite{FEF}, Engler \cite{ENG} and \cite{LOL}. Viceversa, the categorical foundation of set theory is not a clear matter. Categorical logic allows us to speak about categorical models of mathematical theories and in particular about categorical models of set theories, as for example $IZF$, $CZF$, $ZF$. In 1995 Joyal and Moerdijk in their \emph{Algebraic Set Theory} proposed a new technique to guarantee the existence of internal models of $ZF$ and $IZF$ in a category. Before their work, all proposed categorical models of set theory were based on categories built starting from some standard models of set theory. Joyal and Moerdijk's approach has two main advantages:
\begin{enumerate}
\item it allows to prove the existence of an internal model of $IZF$ by simply checking some purely categorical properties of a class of arrows;
\item it works for (relatively) arbitrary categories.
\end{enumerate}
Their book gave rise to a large spread of works. It looked as algebraic set theory could give not only a useful tool to mathematics, but also something that could be interesting for foundational studies. 

There is maybe nothing more suggestive and more imprecise than the claim that \emph{Category theory provides a foundation for mathematics}. We don't mean that this is not true, but we must come to an agreement on what is the meaning of the words \emph{foundation for mathematics}. Kreisel observed that category theory provides a powerful tool to organize mathematics, and this is the way in which most mathematicians involved in category theory think about category theory as a foundation. There are also some attempts to use category theory to build a based-on-functions theory of sets on which it is possible to found the entire mathematical building; these are, for example, the axiomatic systems proposed by Lawvere \cite{LAW} and recalled by \cite{MCL}. In this paper we adopt the attitude to go in accordance with the first vision of \emph{category theory as foundation}. We think that category theory \emph{describes} mathematics in a very effective way. In the following pages this descriptive power will be exemplified by using category theory to clarify the role of classes in the practice of $ZF$ set theory. Category theory (in both its usual and internal version) will be used to give an account of the relation between mathematics and metamathematics.

\section{Classes and ZF}
Set theory is mainly $ZF$ (or better most of the set theorists studies $ZF$), that is the classical first-order theory with equality whose language has (countably many) individual variables, no functional symbols and a binary relational symbol $\in$, and whose specific axioms are the following
\begin{enumerate}
\item $\forall x \forall y (\forall t(t\in x \leftrightarrow t\in y)\rightarrow x=y)$;
\item for every formula $P$ with free variables $p_{1},...,p_{k},t$, 
$$\forall p_{1}...\forall p_{k}\forall x \exists y\forall t(t\in y\leftrightarrow (t\in x\wedge P));$$
\item $\forall x \forall y \exists z \forall t (t\in z\leftrightarrow (t=x\vee t=y))$;
\item $\forall x \exists z \forall t (t\in z\leftrightarrow \exists y(t\in y\wedge y\in x))$;
\item $\forall y \exists z \forall x(x\in z \leftrightarrow x\subseteq y)$;
\item for every formula $F$ with free variables $p_{1},..,p_{k},x,y$,
$$\forall p_{1}...\forall p_{k}\forall z (\forall x (x\in z\rightarrow \exists ! y F)\rightarrow \exists z' \forall x (x\in z \rightarrow \exists y (y\in z' \wedge F)));$$
\item $\exists x (0\in x\wedge \forall t(t\in x\rightarrow \sigma(t)\in x))$;
\item $\forall x(x\neq 0\rightarrow\exists z (z\in x \wedge \forall t (t\in x\rightarrow t\notin z)))$;
\end{enumerate}
where as usual $x\subseteq y$ is a shorthand for $\forall t(t\in x\rightarrow t\in y)$ and $\sigma(t)$ stays for $t\cup \left\{t\right\}$.

Set theorists work with sets, but they speak also about classes. However classes don't exist in $ZF$. The practice of set theorists consist in considering classes as shorthands or formal writings. If $P$ is a formula with a distinguished variable $x$, then a class is a formal writing $\left\{x|P(x)\right\}$ and the expression
$$t\in \left\{x|P(x)\right\}$$
is simply a shorthand for $P[t/x]$. For example set theorists write $t\in {\cal V}$, as a shorthand for $t\in \left\{x|x=x\right\}$, that is a shorthand for $t=t$. They also write $t\in {\cal ON}$ as a shorthand for $t\in\left\{x|ON(x)\right\}$, that is a shorthand for $ON(t)$ where $ON(x)$ is  the formula that expresses the fact that $x$ is an ordinal, that is
$$\forall s\forall s'\forall s''((s\in t\wedge s'\in t\wedge s''\in t\wedge s\in s'\wedge s'\in s'')\rightarrow s\in s'')\wedge$$
$$\wedge\forall s \forall s' ((s\in t\wedge s'\in t\wedge s\neq s')\rightarrow (s\in s'\vee s'\in s))\wedge \forall s (s\in t\rightarrow s\subseteq t).$$
What is clear, per se, is the fact that classes are not mathematical, but metamathematical objects. However these shorthands enjoy some of the properties that are enjoyed by sets. I can give a meaning to the notion of intersection of classes or to that of subclass.  This accidental fact makes some confusion breaking into! The metamathematical level and the mathematical one are confused giving rise to impressive, but heavily incorrect sentences like 
$$\textrm{"Sets are exactly those classes for which comprehension axiom is true".}(*)$$
It doesn't matter the fact that sentences like these can be used as some sort of \emph{convincing argument} to make students having an intuition of the notion of set, these sentences look more dangerous, than useful. 

The ease with which set theorists use classes (that are formal objects) as some quasi-sets is due to the dual fact that their syntactical structure (and in particular the use of connectives and quantifiers), as we have already seen, allows this for many operations and that they know that there exists a theory of classes (NBG) that is a conservative extension of ZFC. However classes are \emph{nothing more than syntactical objects}. 

In the next sections we will see how the use of category theory can help us to describe the relation between metamathematics and mathematics, and to distinguish between real sentences about sets and external (naive) sentences about sets. 
\section{The syntactic category of ZF}
We proceed now defining the syntactic category of $ZF$, that we will denote with $\mathbb{ZF}$. 
Its objects are formulas in context, i.e. formal writing as 
$$\left\{x_{1},..,x_{n}|P\right\},$$
where $x_{1},...,x_{n}$ is a (possibly empty) list of distinct variables and $P$ is a formula with all free variables among $x_{1},...,x_{n}$.
An arrow from a formula in context $\left\{\textbf{x}|P\right\}$ to another $\left\{\textbf{y}|Q\right\}$ is an equivalence class of formulas in context
$$[\left\{\textbf{x'},\textbf{y'}|F\right\}]_{\equiv},$$
where $\textbf{x'}$ is a list of variables that have the same lenght as $\textbf{x}$, and $\textbf{y'}$ is a list of variables that has the same lenght as  $\textbf{y}$, for which 
\begin{enumerate}
\item $F\vdash_{ZF} P[\textbf{x}'/\textbf{x}]\wedge Q[\textbf{y}'/\textbf{y}]$;
\item $F\wedge F[\textbf{y''}/\textbf{y'}]\vdash_{ZF} \textbf{y'}=\textbf{y''}$;
\item $P[\textbf{x}'/\textbf{x}]\vdash_{ZF}\exists \textbf{y'}F$. 
\end{enumerate}
The equivalence relation is given by
$$\left\{\textbf{x'},\textbf{y'}|F\right\}\equiv \left\{\textbf{x''},\textbf{y''}|F'\right\}\textrm{ iff }\vdash_{ZF}F\leftrightarrow F'[\textbf{x'}/\textbf{x''},\textbf{y'}/\textbf{y''}].$$
For composition it is sufficient to consider arrows $[\left\{\textbf{x'},\textbf{y'}|F\right\}]_{\equiv}$ and $[\left\{\textbf{y'},\textbf{z'}|F'\right\}]_{\equiv}$ with all distinct variables (this doesn't determine a loss of generality). In this case the composition is given by
$$[\left\{\textbf{x'},\textbf{z'}|\exists \textbf{y'}(F\wedge F')\right\}]_{\equiv}.$$
The category we obtain is regular and Boolean (see Johnstone \cite{JOH} for a proof).
In particular we recall the fact that 
$$\left\{\textbf{x}|P\right\}\times\left\{\textbf{y}|Q\right\}\cong\left\{\textbf{x},\textbf{y}|P\wedge Q\right\},$$
where we supposed (without loss of generality) that all involved variables are distinct.
Moreover terminal objects are given for example by
$$\left\{\;|\forall x(x=x)\right\},\;\left\{x|\forall t(t\notin x)\right\}.$$
We can also easily prove that $\mathbb{ZF}$ is extensive, thanks to the fact that $$\vdash_{ZF}0\neq 1.$$

Initial objects are given for example by
$$\left\{\;|\exists x(x\neq x)\right\},\;\left\{x|x\neq x\right\}.$$

$\mathbb{ZF}$ is also an exact category; the proof is based on \emph{Scott's trick} (see Rosolini \cite{ROS}, Maschio \cite{MAS}). Finally $\mathbb{ZF}$ has also a subobject classifier that is given by
$$[\left\{x,x'|x=0\wedge x'=1\right\}]:\left\{x|x=0\right\}\rightarrow \left\{x|x=0\vee x=1\right\}.$$

\section{Definable classes}
The category of definable classes of $ZF$, that we denote with $\mathbb{DCL}[ZF]$, is the full subcategory of $\mathbb{ZF}$ determined by all those objects that have list of variables of lenght $1$. It is clear that this is the category of classes, as they are intended in the practice of $ZF$.

The meaningful fact about $\mathbb{DCL}[ZF]$ is the fact that, although it is a subcategory of $\mathbb{ZF}$, it is provable to be equivalent to it. This result comes from the fact that in $ZF$ there is an available representation for ordered pairs:
$$x=<x_{1},x_{2}>\equiv^{def} x=\left\{\left\{x_{1}\right\},\left\{x_{1},x_{2}\right\}\right\}.$$ 

Every object of $\mathbb{ZF}$, $\left\{x_{1},..,x_{n}|P\right\}$, is isomorphic to $$\left\{x|\exists x_{1}...\exists x_{n}(x=<<...<x_{1},x_{2}>,...>,x_{n}>\wedge P)\right\},$$ where $x$ is a variable distinct from $x_{1},...,x_{n}$.
\section{The categorical side of class operations}
Before going on, we want to spend some words giving an example of the categorical interpretation of the practice of operations between classes. This is done to justify the adequacy of $\mathbb{DCL}[ZF]$ for the aim of describing the category of formal classes. For this purpose we focus on intersection. We consider two definable classes $\left\{x|P\right\}$ and $\left\{y|Q\right\}$ (we can suppose $x$ and $y$ to be distinct without loss of generality); we usually take their intersection to be $\left\{x,|P\wedge Q[x/y]\right\}$. However this operation can be expressed in purely categorical terms. In fact $\left\{x|P\wedge Q[x/y]\right\}$ is isomorphic in $\mathbb{DCL}[ZF]$ to the object we obtain by considering the following pullback.
\begin{diagram}
\left\{x|P\right\}\cap \left\{y|Q\right\}				&\rTo													&\left\{x|x=x\right\}\\
\dTo										&														&\dTo_{[\left\{x,x',y'|x=x'\wedge x=y'\right\}]_{\equiv}}\\
\left\{x,y|P\wedge Q\right\}						&\rTo_{[\left\{x,y,x',y'|P\wedge Q\wedge x=x'\wedge y=y'\right\}]_{\equiv}}				&\left\{x,y|x=x\wedge y=y\right\}\\
\end{diagram}
\section{Definable sets}
Let's now consider the full subcategory $\mathbb{DST}[ZF]$ of $\mathbb{DCL}[ZF]$ determined by the objects $\left\{x|P\right\}$ for which $$\vdash_{ZF}\exists z \forall x(x\in z\leftrightarrow P).$$
This is the category of sets we have in mind, when we think sentence $(*)$ is true.

We have the following result:
\begin{theorem}
$\mathbb{DST}[ZF]$ is a topos.
\end{theorem}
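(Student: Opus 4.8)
The plan is to verify the three defining clauses of an elementary topos for $\mathbb{DST}[ZF]$: the existence of finite limits, of a subobject classifier, and of exponentials. Since $\mathbb{DST}[ZF]$ is a \emph{full} subcategory of $\mathbb{ZF}$, the guiding principle is that every universal construction I need can be performed in the ambient category $\mathbb{ZF}$, and it then suffices to check that the objects it produces are again provably sets; fullness guarantees that the same universal property is witnessed inside $\mathbb{DST}[ZF]$, with no new morphisms to worry about. The real content of the theorem will therefore be the closure of $\mathbb{DST}[ZF]$ under these operations, together with the verification of cartesian closure.

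First I would dispose of finite limits by showing that $\mathbb{DST}[ZF]$ is closed under them in $\mathbb{ZF}$. The terminal object $\{x\mid\forall t(t\notin x)\}$ is the singleton $\{0\}$, which is a set; given definable sets $A=\{x\mid P\}$ and $B=\{y\mid Q\}$, their product $\{x,y\mid P\wedge Q\}$ corresponds, through the pairing representation of ordered pairs recalled above, to the cartesian product $A\times B$, which $ZF$ proves to be a set (it is a subclass of $\mathcal{P}(\mathcal{P}(A\cup B))$); and an equalizer is a subobject of such a product, hence a subclass of a set, hence a set by the comprehension axiom. Thus $\mathbb{DST}[ZF]$ has all finite limits, computed as in $\mathbb{ZF}$.

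Next, the subobject classifier. The classifier of $\mathbb{ZF}$ has codomain $\Omega=\{x\mid x=0\vee x=1\}$, i.e. $\{0,1\}$, and domain $\{x\mid x=0\}$, i.e. $\{0\}$, both of which are provably sets, so it lives in $\mathbb{DST}[ZF]$. Because the subcategory is full, $\mathrm{Hom}_{\mathbb{DST}[ZF]}(D,\Omega)=\mathrm{Hom}_{\mathbb{ZF}}(D,\Omega)\cong\mathrm{Sub}_{\mathbb{ZF}}(D)$ for every object $D$; and since any $\mathbb{ZF}$-subobject of a set $D=\{x\mid P\}$ is represented by some $\{x\mid P'\}$ with $P'\vdash_{ZF}P$, which is a subclass of $D$ and hence again a set by comprehension, the subobjects of $D$ in $\mathbb{DST}[ZF]$ coincide with those in $\mathbb{ZF}$. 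Together these identifications show that $\Omega$ classifies subobjects in $\mathbb{DST}[ZF]$ as well.

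The heart of the proof, and the step I expect to be the main obstacle, is cartesian closure, which is exactly where the passage from classes to sets does real work. For definable sets $A$ and $B$ I would take as exponential the set $B^{A}$ of all functions from $A$ to $B$: this is a subclass of $\mathcal{P}(A\times B)$ and so is a set by the power set and comprehension axioms. This is the one place that genuinely uses power sets, and the reason $\mathbb{ZF}$ itself need not be cartesian closed (exponentials with proper-class exponent fail) while $\mathbb{DST}[ZF]$ is. It then remains to verify the universal property. Given a morphism $g\colon C\times A\to B$ presented by a functional formula $G$, currying is itself definable: the assignment sending $c$ to the set $\{\langle a,b\rangle\mid a\in A\wedge b\in B\wedge G(c,a,b)\}$ is provably total and single-valued into $B^{A}$, hence an arrow $\lambda g\colon C\to B^{A}$, and composing with the (definable) evaluation relation returns $g$. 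The delicate part is purely bookkeeping: checking that $\lambda g$ satisfies the three functionality clauses in the definition of an arrow of $\mathbb{ZF}$, that $\mathrm{ev}\circ(\lambda g\times\mathrm{id}_A)=g$, and that $\lambda g$ is the unique such arrow, the last of these following from extensionality. Establishing this natural bijection between arrows $C\times A\to B$ and arrows $C\to B^{A}$ completes the verification, and $\mathbb{DST}[ZF]$ is thereby a topos.
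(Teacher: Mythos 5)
Your proof is correct and follows essentially the same route as the paper's: finite limits computed as in $\mathbb{ZF}$ (a terminal object that is provably a singleton set, products via the ordered-pair encoding, equalizers cut out by separation), the subobject classifier inherited unchanged from $\mathbb{ZF}$, and the exponential $B^{A}$ given by the definable class of functions from $A$ to $B$. The only difference is one of emphasis: you spell out the set-existence checks (power set plus separation for $B^{A}$, fullness arguments for transferring the universal properties) and the currying verification, all of which the paper's proof states only implicitly by exhibiting the objects and the evaluation arrow.
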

\begin{proof}
Finite limits are exactly finite limits in $\mathbb{ZF}$: a terminal object is given by $\left\{x|x=0\right\}$,  equalizers are exactly equalizers in $\mathbb{ZF}$, while the product of two definible sets $\left\{x|P\right\}$, $\left\{y|Q\right\}$ is given by
$$\left\{z|\exists x\exists y(z=<x,y>\wedge P\wedge Q)\right\}$$
with the obvious projections (we are assuming $x$ and $y$ to be distinct without loss of generality).
Subobject classifier is exactly the subobject classifier of $\mathbb{ZF}$, while exponentials $\left\{y|Q\right\}^{\left\{x|P\right\}}$ are given by
$$\left\{f|Fun(f)\wedge \forall s(s\in dom(f)\leftrightarrow P(s))\wedge \forall s'(s'\in ran(f)\rightarrow Q(s')) \right\},$$
with evaluation arrow given by
$$[\{F,y|\exists f \exists x(F=<f,x>\wedge Fun(f)\wedge \forall s(s\in dom(f)\leftrightarrow P(s))\wedge$$
$$\wedge  \forall s'(s'\in ran(f)\rightarrow Q(s')) \wedge <x,y>\in f)\}]_{\equiv}.$$
\end{proof}
\section{A system of axioms for algebraic set theory}
In this section we present a list of axioms for algebraic set theory proposed by Simpson \cite{SIM}.
Every axiomatization of algebraic set theory is based on a pair $(\mathbb{C},{\cal S})$ in which $\mathbb{C}$ is a category and ${\cal S}$ is a family of arrows of $\mathbb{C}$ (called family of \emph{small maps}). 
The following are the axioms:
\begin{enumerate}
\item $\mathbb{C}$ is a regular category;
\item The composition of two arrows in ${\cal S}$ is in ${\cal S}$;
\item Every mono is in ${\cal S}$;
\item (STABILITY) If $f\in{\cal S}$ and $f'$ is a pullback of $f$, then $f'\in{\cal S}$;
\item (REPRESENTABILITY) For every $X$, there exists an object ${\cal P}_{\cal S}(X)$ and an arrow $e_{X}:\in_{X}\rightarrow X\times {\cal P}_{\cal S}(X)$ with $\pi_{2}\circ e_{X}\in {\cal S}$ so that, for every $\psi:R\rightarrow X\times Z$ with $\pi_{2}\circ \psi\in {\cal S}$, there exists a unique arrow $\rho:Z\rightarrow {\cal P}_{\cal S}(X)$ that fits in a pullback as follows;
\begin{diagram}
R		&\rTo		&\in_{X}\\
\dTo^{\psi} &			&\dTo^{e_{X}}\\
X\times Z   &\rTo^{id_{X}\times \rho} &X\times  {\cal P}_{\cal S}(X)\\
\end{diagram} 
\item (POWERSET) $\sqsubseteq_{X}:\subseteq_{X}\rightarrow {\cal P}_{\cal S}(X)\times {\cal P}_{\cal S}(X)$ satisfies $\pi_{2}\circ \sqsubseteq_{X}\in {\cal S}$,

where $\sqsubseteq_{X}$ is determined by the following property:
\\

\emph{An arrow $f=<f_{1},f_{2}>:Z\rightarrow {\cal P}_{\cal S}(X)\times {\cal P}_{\cal S}(X)$ factorizes through $\sqsubseteq_{X}$ if and only if, considering the following couple of pullbacks},
\begin{diagram}
P		&\rTo	&\in_{X}		&\lTo			&Q\\
\dTo^{\pi}	&		&\dTo^{e_{X}}	&			&\dTo^{\pi'}\\
X\times Z	&\rTo^{id\times f_{1}}&X\times {\cal P}_{\cal S}(X)&\lTo^{id\times f_{2}}&X\times Z\\		
\end{diagram}
\emph{the arrow $\pi$ factorizes through} $\pi'$.
\end{enumerate}

We want also to recall some definitions:
\begin{definition} An object $U$ in a category $\mathbb{C}$ is \emph{universal}, if for every object $X$ in $\mathbb{C}$, there exists a mono $j:X\rightarrow U$.
\end{definition}
\begin{definition} An object $U$ in a regular category $\mathbb{C}$ with a class of small maps ${\cal S}$ is a \emph{universe} if there exists a mono $j:{\cal P}_{\cal S}(U)\rightarrow U$.
\end{definition}
\begin{definition} A $ZF$-\emph{algebra} for a regular category $\mathbb{C}$ with a class of small maps ${\cal S}$ is an internal sup-semilattice $(U,\subseteq)$ together with an arrow $\sigma:U\rightarrow U$, so that for every $\lambda: B\rightarrow U$ and for every $j:B\rightarrow A\in {\cal S}$, there exists $sup_{j}(\lambda):A\rightarrow U$ so that for any $j':B'\rightarrow A$ and $\lambda':B'\rightarrow U$, once we consider the following pullback,
\begin{diagram}
P		&\rTo^{\pi_{2}}	&B\\
\dTo^{\pi_{1}}	&		&\dTo^{j}\\
B'		&\rTo^{j'}		&A\\
\end{diagram}
we have that
$$sup_{j}(\lambda)\circ j'\subseteq \lambda'\textrm{ if and only if }\lambda\circ \pi_{2}\subseteq \lambda'\circ \pi_{1}.$$
\emph{Morphisms} of $ZF$-algebras are morphisms between internal sup-semilattices that preserve $sup_{j}(\lambda)$ along $j\in {\cal S}$ and commute with the arrows $\sigma$. An \emph{initial} $ZF$-algebra is an initial object in the category of $ZF$-algebras and morphisms between them.
\end{definition}

\section{Small maps}
We now want to define a class of small maps in $\mathbb{DCL}[ZF]$: the class ${\cal S}$. An arrow
\begin{diagram}
\left\{x|P\right\}	&\rTo^{[\left\{x,y|F\right\}]}		&\left\{y|Q\right\}\\
\end{diagram}
is in ${\cal S}$ if and only if 
$$\vdash_{ZF}\forall y \exists z \forall x(F(x,y)\leftrightarrow x\in z ).$$
The class ${\cal S}$ is a class of small maps in the sense of Simpson \cite{SIM}:
\begin{lemma} Every mono is in ${\cal S}$.
\end{lemma}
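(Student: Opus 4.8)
The plan is to reduce the statement to the elementary observation that a monomorphism in $\mathbb{DCL}[ZF]$ is represented by an \emph{injective} functional relation, and that an injective functional relation has fibres with at most one element, each of which is provably a set in $ZF$.

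First I would pin down what it means for an arrow $[\{x,y\mid F\}]_{\equiv}:\{x\mid P\}\to\{y\mid Q\}$ to be monic. Since $\mathbb{DCL}[ZF]$ is equivalent to the regular category $\mathbb{ZF}$, I may compute the kernel pair of the arrow by pulling it back against itself and test whether it coincides with the diagonal. The kernel pair is represented by $\{x,x'\mid\exists y(F(x,y)\wedge F(x',y))\}$, while the diagonal is $\{x,x'\mid P(x)\wedge x=x'\}$; recalling that $F$ proves $P[x/x']$ by condition $1$, the arrow is monic exactly when
$$\exists y(F(x,y)\wedge F(x',y))\vdash_{ZF} x=x',$$
equivalently when $F(x,y)\wedge F(x',y)\vdash_{ZF} x=x'$ for a fresh variable $x'$, i.e. when $F$ is injective. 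This monomorphism-to-injectivity translation is the one genuinely structural step, and it is where I expect the work to sit; everything afterwards is a routine verification inside $ZF$.

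Granting injectivity, I would argue directly in $ZF$ that each fibre of $F$ is a set, reasoning by excluded middle, which is available because $ZF$ is classical. Fix $y$. If $\forall x\,\neg F(x,y)$, then the empty set $0$ witnesses $\forall x(F(x,y)\leftrightarrow x\in 0)$, both sides being false. Otherwise there is some $x_{0}$ with $F(x_{0},y)$; by injectivity such an $x_{0}$ is unique, so by the pairing axiom the singleton $z=\{x_{0}\}$ exists and satisfies $F(x,y)\leftrightarrow x=x_{0}\leftrightarrow x\in z$, where the first equivalence uses injectivity together with $F(x_{0},y)$.

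In either case a witness $z$ exists, so I obtain
$$\vdash_{ZF}\forall y\,\exists z\,\forall x\,(F(x,y)\leftrightarrow x\in z),$$
which is precisely the condition for the arrow to lie in $\mathcal{S}$. The heart of the matter is simply that an injective functional relation has fibres of cardinality at most one, and that both singletons and the empty set are provably sets in $ZF$; the only non-bookkeeping ingredient is the identification of monos with injective functional relations.
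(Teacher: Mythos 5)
Your proof is correct and follows essentially the same route as the paper, whose entire proof is the one-line remark that the empty set (via the separation axiom) and singletons (via the pairing axiom) handle the fibres of a mono. Your kernel-pair identification of monos with injective functional relations and the case split on empty versus singleton fibres is precisely the argument the paper leaves implicit.
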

\begin{proof}
This is obtained using axiom 2.\;to obtain the existence of an empty set and axiom 3.\;to prove the existence of singletons.
\end{proof}
\begin{lemma} Compositions of arrows in ${\cal S}$ are in ${\cal S}$.
\end{lemma}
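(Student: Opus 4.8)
The plan is to reduce the claim to a single $ZF$-provable sentence and then to derive it by the standard ``replacement-then-union'' argument: internally, a small-indexed union of small fibres is again small.

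First I would fix representatives with all displayed variables distinct, say $f=[\{x,y|F\}]_{\equiv}\colon\{x|P\}\to\{y|Q\}$ and $g=[\{y,w|G\}]_{\equiv}\colon\{y|Q\}\to\{w|R\}$, both in ${\cal S}$. The composition rule then gives
$$g\circ f=[\{x,w|\exists y(F\wedge G)\}]_{\equiv},$$
and by the definition of ${\cal S}$ the assertion $g\circ f\in{\cal S}$ is exactly
$$\vdash_{ZF}\forall w\,\exists z\,\forall x\bigl(\exists y(F(x,y)\wedge G(y,w))\leftrightarrow x\in z\bigr).$$
The hypotheses $f,g\in{\cal S}$ supply $\vdash_{ZF}\forall y\,\exists z\,\forall x(F(x,y)\leftrightarrow x\in z)$ and $\vdash_{ZF}\forall w\,\exists z\,\forall y(G(y,w)\leftrightarrow y\in z)$; informally, all fibres of $f$ and of $g$ are sets.

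Next I would argue inside $ZF$, fixing an arbitrary $w$. Smallness of $g$ gives a set $b$ with $\forall y(y\in b\leftrightarrow G(y,w))$. Smallness of $f$ gives, for each $y$, a set $z$ with $\forall x(F(x,y)\leftrightarrow x\in z)$, and extensionality (axiom 1) makes this $z$ unique, so the formula $\forall x(F(x,y)\leftrightarrow x\in z)$ defines a single-valued total relation between $y$ and $z$. Applying replacement (axiom 6) to this relation on the domain $b$ yields a set $z'$ containing the fibre-set $\{x|F(x,y)\}$ for every $y\in b$. Union (axiom 4) then produces $u=\bigcup z'$, and separation (axiom 2) carves out $d=\{x\in u|\exists y(G(y,w)\wedge F(x,y))\}$. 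Since any $x$ with $\exists y(G(y,w)\wedge F(x,y))$ lies in the fibre-set of some $y\in b$ and hence in $u$, one checks
$$\forall x\bigl(x\in d\leftrightarrow\exists y(G(y,w)\wedge F(x,y))\bigr),$$
so $d$ is the witness $z$ sought for this $w$; discharging $w$ finishes the proof.

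The only genuinely delicate point is the use of replacement. I must present the fibre assignment $y\mapsto\{x|F(x,y)\}$ so that it fits the $\exists!y$-hypothesis of axiom 6 verbatim, which is precisely where extensionality is needed, and I must rename bound and free variables carefully so that the occurrences of $x,y,z$ coming from $F$, $G$ and from the axiom schemata do not collide after the substitutions forced by the composition rule. Everything else is routine manipulation of the comprehension, union and replacement axioms already listed.
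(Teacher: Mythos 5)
Your proof is correct and follows the same route as the paper, which simply cites replacement (axiom 6) and union (axiom 4) for this lemma: internally, each fibre of the composite is a small-indexed union of small fibres. You merely spell out the details the paper leaves implicit, including the auxiliary uses of extensionality (to make the fibre assignment single-valued for replacement) and separation (to trim $\bigcup z'$ down to the exact fibre), both of which are genuinely needed given the collection-style formulation of axiom 6.
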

\begin{proof}
This is obtained using axioms 6.\;and 4. 
\end{proof}
\begin{lemma}
The stability axiom is true for ${\cal S}$.
\end{lemma}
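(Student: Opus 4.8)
The plan is to reduce the whole statement to a fibrewise computation closed off by a single application of the replacement schema (axiom 6.). Since the inclusion $\mathbb{DCL}[ZF]\hookrightarrow\mathbb{ZF}$ is an equivalence, it preserves and reflects pullbacks, so any pullback of a map in $\mathbb{DCL}[ZF]$ is isomorphic to the corresponding pullback computed syntactically in $\mathbb{ZF}$. Moreover, membership in ${\cal S}$ is invariant under isomorphism: if $f$ is small and $f'=\beta\circ f\circ\alpha^{-1}$ for isos $\alpha,\beta$, then each fibre of $f'$ is the image under the definable bijection $\alpha$ of a fibre of $f$, hence a set by replacement. Thus I would first record iso-invariance of ${\cal S}$, and then it suffices to take the canonical $\mathbb{ZF}$-pullback of a small map and verify the defining condition of ${\cal S}$ for its projection.

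Concretely, I would fix a small map $f=[\{x,y|F\}]:\{x|P\}\to\{y|Q\}$, so that $\vdash_{ZF}\forall y\exists z\forall x(F(x,y)\leftrightarrow x\in z)$, together with an arbitrary map $g=[\{w,y|G\}]:\{w|S\}\to\{y|Q\}$. Using regularity of $\mathbb{ZF}$ and the pairing representation of the previous sections, the pullback is (up to isomorphism) the object
$$\left\{u\,\middle|\,\exists x\exists w\big(u=<x,w>\wedge\exists y(F(x,y)\wedge G(w,y))\big)\right\},$$
and its projection $q$ onto $\{w|S\}$ has graph
$$H(u,w)\;\equiv\;\exists x\big(u=<x,w>\wedge\exists y(F(x,y)\wedge G(w,y))\big).$$
Establishing $q\in{\cal S}$ then amounts to proving $\vdash_{ZF}\forall w\exists v\forall u(H(u,w)\leftrightarrow u\in v)$.

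For this I would argue inside $ZF$, fixing $w$. If $S(w)$ fails, then no $u$ satisfies $H(u,w)$, since $G$ entails $S(w)$, and the empty set witnesses the claim. If $S(w)$ holds, then conditions (2.)--(3.) of the definition of arrows make $g$ functional, so there is a unique $y_{0}$ with $G(w,y_{0})$; smallness of $f$ yields a set $z_{0}$ with $\forall x(F(x,y_{0})\leftrightarrow x\in z_{0})$. Applying replacement (axiom 6.) to the definable function $x\mapsto <x,w>$ on $z_{0}$, together with separation, produces a set $v$ with $\forall u(u\in v\leftrightarrow\exists x(x\in z_{0}\wedge u=<x,w>))$. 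Rewriting $H(u,w)$ by means of the uniqueness of $y_{0}$, one checks $\forall u(H(u,w)\leftrightarrow u\in v)$, which is exactly what is required; hence $q\in{\cal S}$.

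The routine content here is the bookkeeping of variables and the pairing encoding needed to pass between the two-variable pullback of $\mathbb{ZF}$ and the one-variable objects of $\mathbb{DCL}[ZF]$. The only genuine mathematical step — and the one I expect to carry the whole proof — is the observation that the fibre of the pullback projection over $w$ is, via the other projection, the image of the fibre of $f$ over $g(w)$, so that being a set is transported along a definable bijection by replacement. I would therefore present iso-invariance of ${\cal S}$ and this fibrewise identification as the two facts doing the work, and treat the case distinction on $S(w)$ as a formality.
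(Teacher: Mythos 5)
Your proof is correct and follows essentially the same route as the paper: the paper's entire proof is the one-line remark that stability ``is obtained using axiom 6'' (replacement), and your argument is precisely a careful expansion of that, with the fibrewise application of replacement to $x\mapsto <x,w>$ carrying the load. The extra bookkeeping you supply (iso-invariance of ${\cal S}$, transport of the pullback along the equivalence $\mathbb{DCL}[ZF]\simeq\mathbb{ZF}$, and the case split on $S(w)$) is exactly what the paper leaves implicit.
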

\begin{proof}
This is obtained using axiom 6.
\end{proof}
\begin{lemma}
The representability axiom is satisfied by ${\cal S}$.
\end{lemma}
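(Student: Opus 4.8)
The plan is to exhibit the power object ${\cal P}_{\cal S}(X)$ explicitly as the class of all sets contained in $X$. Fix an object $X=\left\{x|P\right\}$ of $\mathbb{DCL}[ZF]$ and work up to the equivalence $\mathbb{DCL}[ZF]\simeq\mathbb{ZF}$ so as to use multi-variable contexts freely. I would set
$${\cal P}_{\cal S}(X):=\left\{w|\forall t(t\in w\rightarrow P[t/x])\right\},$$
the definable class of those sets $w$ with $w\subseteq X$; notice that no appeal to the power set axiom is needed here, since ${\cal P}_{\cal S}(X)$ is merely required to be a class, not a definable set. For the membership object I would take the subobject
$$\in_{X}:=\left\{x,w|x\in w\wedge \forall t(t\in w\rightarrow P[t/x])\right\}$$
of $X\times {\cal P}_{\cal S}(X)$, with $e_{X}$ the corresponding inclusion. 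The first thing to verify is that $\pi_{2}\circ e_{X}\in{\cal S}$: its fibre over a given $w$ is exactly the set $w$, so the defining condition of ${\cal S}$ holds with the witnessing set $z:=w$, i.e. $\vdash_{ZF}\forall w\exists z\forall x(x\in w\leftrightarrow x\in z)$.

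Next, given any $\psi:R\rightarrow X\times Z$ with $\pi_{2}\circ\psi\in{\cal S}$, I would note that since $e_{X}$ is monic the pullback condition forces $\psi$ to be monic as well, so I may regard $R$ as a relation $\left\{x,z|G\right\}$ on $X\times Z$, with $\vdash_{ZF}G\rightarrow P$ in the $X$-coordinate. The classifying arrow is then defined by
$$\rho:=[\left\{z,w|\forall x(x\in w\leftrightarrow G)\right\}]_{\equiv}.$$
Here the three defining conditions of an arrow of $\mathbb{DCL}[ZF]$ are checked as follows: the hypothesis that $\pi_{2}\circ\psi$ is small supplies, for each $z$, a set $w$ with $\forall x(x\in w\leftrightarrow G)$, which is exactly the totality condition 3; Extensionality (axiom 1) gives uniqueness of this $w$, which is the functionality condition 2; and from $G\vdash_{ZF}P$ one gets $w\subseteq X$, so that $\rho$ genuinely lands in ${\cal P}_{\cal S}(X)$, which is condition 1.

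It then remains to verify that $\rho$ makes the required square a pullback and that it is the unique such arrow. For the pullback I would compute the pullback of $e_{X}$ along $id_{X}\times\rho$ directly in $\mathbb{ZF}$: an element is a pair $<x,z>$ with $x\in\rho(z)$, and by construction $x\in\rho(z)\leftrightarrow G(x,z)$, so the pullback is exactly $\left\{x,z|G\right\}=R$ with comparison map equal to $\psi$. Uniqueness of $\rho$ is again a consequence of Extensionality: any $\rho'$ fitting in the pullback must satisfy $x\in\rho'(z)\leftrightarrow G(x,z)$ for all $x$, whence $\rho'(z)$ and $\rho(z)$ have the same elements and coincide. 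I expect the only real work to be bureaucratic rather than conceptual: translating the three arrow conditions into the provability statements above and confirming that the naive set-theoretic pullback computation is faithfully reproduced by the syntactic composition rule. The genuine insight — and the step I would isolate first — is the identification of ${\cal P}_{\cal S}(X)$ with the class of subsets of $X$, together with the observation that smallness of $\psi$ is precisely what guarantees that $\rho$ is total.
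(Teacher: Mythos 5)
Your proposal follows essentially the same route as the paper's own proof: the same power object ${\cal P}_{\cal S}(X)$ (the definable class of subsets of $X$), the same membership class $\in_{X}$ with its inclusion as $e_{X}$, and the same classifying arrow defined by the formula $\forall x(x\in w\leftrightarrow G)$; indeed the paper stops after exhibiting these data, whereas you additionally sketch the smallness of $\pi_{2}\circ e_{X}$, the pullback computation and uniqueness. One small correction: as written, your formula for $\rho$ is not an arrow of the syntactic category, since condition 1 of arrowhood requires the formula to entail the \emph{domain} predicate $Q(z)$ as well, and $\forall x(x\in w\leftrightarrow G)$ does not (if $\neg Q(z)$ then $G(x,z)$ is false for every $x$, so $w=0$ satisfies the formula for such $z$); the paper repairs exactly this by conjoining $Q$, i.e. $\rho:=[\left\{z,w|Q\wedge\forall x(x\in w\leftrightarrow G)\right\}]_{\equiv}$, after which your totality, functionality, pullback and uniqueness arguments go through unchanged.
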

\begin{proof}
Fix a definible class $X=\left\{x|P\right\}$. The definible class ${\cal P}_{\cal S}(X)$ is given by 
$$\left\{y|\forall x(x\in y\rightarrow P)\right\},$$
while the definible class $\in_{X}$ is given by
$$\left\{z|\exists x \exists y(z=<x,y>\wedge \forall t(t\in y\rightarrow P(t))\wedge x\in y)\right\}$$
and the arrow $e_{X}:\in_{X}\rightarrow X\times {\cal P}_{\cal S}(X)$ is given by
$$[\left\{z,z'|\exists x \exists y(z=<x,y>\wedge \forall t(t\in y\rightarrow P(t))\wedge x\in y)\wedge z=z'\right\}]_{\equiv}.$$
Now if the following arrow 
\begin{diagram}
&\left\{z|\exists x \exists y (z=<x,y>\wedge R(x,y))\right\}	
&\rTo^{[\left\{z,z'|\exists x \exists y (z=<x,y>\wedge R(x,y))\wedge z=z'\right\}]_{\equiv}}	&\left\{x|P\right\}\times \left\{y|Q\right\}\\
\end{diagram}
is so that $R(x,y)\vdash_{ZF}P(x)\wedge Q(y)$ and $\vdash_{ZF}\forall y \exists y'\forall x (R(x,y)\leftrightarrow x\in y')$, that means that it represents (without loss of generality) a relation which has second component in ${\cal S}$, then its representing arrow from $\left\{y|Q\right\}$ to ${\cal P}_{\cal S}(X)$ is given by 
$$[\left\{y,y'|Q\wedge \forall x(R(x,y)\leftrightarrow x\in y')\right\}]_{\equiv}.$$ 
\end{proof}
\begin{lemma}
The powerset axiom is verified by ${\cal S}$.
\end{lemma}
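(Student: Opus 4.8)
The plan is to exhibit the object $\subseteq_{X}$ and the mono $\sqsubseteq_{X}$ explicitly for $X=\left\{x|P\right\}$, verify the characterizing universal property, and then reduce the smallness condition $\pi_{2}\circ\sqsubseteq_{X}\in{\cal S}$ to the $ZF$ power set axiom. First I would take
$$\subseteq_{X}=\left\{w|\exists y\exists y'(w=<y,y'>\wedge\forall x(x\in y'\rightarrow P)\wedge y\subseteq y')\right\},$$
with $\sqsubseteq_{X}$ the evident inclusion into ${\cal P}_{\cal S}(X)\times{\cal P}_{\cal S}(X)$. Since $y\subseteq y'$ together with $\forall x(x\in y'\rightarrow P)$ already forces $\forall x(x\in y\rightarrow P)$, both components of such a pair lie in ${\cal P}_{\cal S}(X)$, so this is genuinely a subobject of ${\cal P}_{\cal S}(X)\times{\cal P}_{\cal S}(X)$.

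Next I would check the defining property of $\sqsubseteq_{X}$. Given $f=<f_{1},f_{2}>:Z\rightarrow{\cal P}_{\cal S}(X)\times{\cal P}_{\cal S}(X)$, I would compute the two pullbacks of $e_{X}$ along $id\times f_{1}$ and $id\times f_{2}$. Using the explicit description of $e_{X}$ from the representability lemma, the pullback $P$ represents the relation $\left\{<x,z>|x\in f_{1}(z)\right\}$ over $X\times Z$ and $Q$ represents $\left\{<x,z>|x\in f_{2}(z)\right\}$, with $\pi,\pi'$ the respective projections to $X\times Z$. In $\mathbb{DCL}[ZF]$ the statement that $\pi$ factorizes through $\pi'$ is then equivalent to the provability of the pointwise inclusion $x\in f_{1}(z)\rightarrow x\in f_{2}(z)$, i.e.\ to $f_{1}(z)\subseteq f_{2}(z)$ for every $z$ --- which is precisely the condition that $f$ factorizes through the inclusion $\sqsubseteq_{X}$ just defined.

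Finally I would verify the smallness condition. The composite $\pi_{2}\circ\sqsubseteq_{X}$ sends a pair $<y,y'>$ to its second component $y'$, so by the definition of ${\cal S}$ its membership in ${\cal S}$ amounts to proving
$$\vdash_{ZF}\forall y'\exists z\forall w(w\in z\leftrightarrow\exists y(w=<y,y'>\wedge y\subseteq y')).$$
The fibre over a fixed $y'$ is in definable bijection with $\left\{y|y\subseteq y'\right\}$, so this reduces exactly to the $ZF$ power set axiom (axiom 5), which guarantees that $\left\{y|y\subseteq y'\right\}$ is a set; re-tagging its elements as pairs is then routine pairing and separation. The main obstacle is the middle step: confirming rigorously that the two pullbacks compute the expected membership relations and that the external notion \emph{``$\pi$ factorizes through $\pi'$''} translates into the internal subset condition. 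Once this dictionary is in place, the smallness claim is an immediate consequence of power set.
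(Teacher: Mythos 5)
Your proposal is correct and follows essentially the same route as the paper: you exhibit the same object $\subseteq_{X}$ (pairs $<y,y'>$ with $y\subseteq y'$ and $y'$ contained in $\left\{x|P\right\}$), take $\sqsubseteq_{X}$ to be the evident inclusion into ${\cal P}_{\cal S}(X)\times{\cal P}_{\cal S}(X)$, and reduce the smallness of $\pi_{2}\circ\sqsubseteq_{X}$ to the $ZF$ power set axiom (axiom 5), exactly as the paper does. The only difference is that you also verify the characterizing universal property of $\sqsubseteq_{X}$ via the two pullbacks, a step the paper's terse proof leaves implicit, so your write-up is if anything more complete.
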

\begin{proof}
The subset relation for $\left\{x|P\right\}$ is given by the following arrow 
$${[\left\{z,z'|\exists y \exists y'(z=<y,y'>\wedge y\subseteq y' \wedge \forall x (x\in y'\rightarrow P(x) ))\wedge z=z'\right\}]_{\equiv}}$$
from $\left\{z|\exists y \exists y'(z=<y,y'>\wedge y\subseteq y' \wedge \forall x (x\in y'\rightarrow P(x) ))\right\}$ to ${\cal P}_{\cal S}(X)\times {\cal P}_{\cal S}(X)$.\\
This relation is in ${\cal S}$ by virtue of axiom 5.
\end{proof}
We now have that small definible classes are exactly those classes $\left\{x|P\right\}$ for which the unique arrow to $1$, that is $[\left\{x,y|P\wedge y=0\right\}]$, is small. This means that 
$$\vdash_{ZF}\forall y\exists z \forall x(x\in z\leftrightarrow (P(x)\wedge y=0)).$$
Now we know that
$$\vdash_{ZF}\exists y (y=0)$$
and so the previous condition is equivalent to say that 
$$\vdash_{ZF}\exists z \forall x(x\in z \leftrightarrow P(x)).$$
This means that small definible classes are exactly definible sets.
We have also that $\left\{x|N(x)\right\}$ is a small definible class, where $N(x)$ is the formula saying that $x$ is a finite ordinal: this follows from axioms 7.\;and 2.

We finally have that $\left\{x|x=x\right\}$ is a universal definible class (and so also a universe), because, for every definible class
$\left\{x|P\right\}$, the arrow $[\left\{x,x'|P\wedge x=x'\right\}]_{\equiv}$ is a mono from it to $\left\{x|x=x\right\}$.

To conclude note that we have an explicit (and obvious) representation for an initial $ZF$-algebra: this is given by 
$$(\left\{x|x=x\right\},\sqsubseteq_{\left\{x|x=x\right\}},[\left\{x,z|z=\left\{x\right\}\right\}]_{\equiv}).$$
If $$[\left\{z,z'|F(z,z')\right\}]_{\equiv}:\left\{z|P\right\}\rightarrow \left\{z'|Q\right\}$$
is small in $\mathbb{DCL}[ZF]$, and $[\left\{z,x|\lambda(z,x)\right\}]_{\equiv}$ is an arrow from $\left\{z|P\right\}$ to $\left\{x|x=x\right\}$, then 
$$sup_{[\left\{z,z'|F(z,z')\right\}]}([\left\{z,x|\lambda(z,x)\right\}]_{\equiv})$$
is given by the arrow 
$$[\left\{z',x|Q\wedge \forall t(t\in x\leftrightarrow \exists z(F(z,z')\wedge \lambda(z,t)) )\right\}]_{\equiv}.$$

\section{Internal category theory}
The notion of internal category is the generalization of the notion of small category. Although we can define what is an internal category in an arbitrary category, we prefer to consider a category $\mathbb{C}$ with all finite limits. 
We have the following 
\begin{definition} An \emph{internal category} of $\mathbb{C}$ is a sestuple 
$$(C_{0},C_{1},\delta_{0},\delta_{1},ID,\Box)$$
in which $C_{0},C_{1}$ are objects of $\mathbb{C}$ and $\delta_{0},\delta_{1}:C_{1}\rightarrow C_{0}$, $ID:C_{0}\rightarrow C_{1}$, $$\Box:C_{1}\times_{\Box} C_{1}\rightarrow C_{1}$$ are arrows of $\mathbb{C}$, where the following is a pullback,
\begin{diagram}
C_{1}\times_{\Box} C_{1}	&\rTo^{p_{1}}	&C_{1}\\
\dTo^{p_{0}}					&		&\dTo^{\delta_{0}}\\
C_{1}				&\rTo^{\delta_{1}}	&C_{0}\\
\end{diagram}
that satisfy the following requests
\begin{enumerate}
\item $\delta_{1}\circ ID=\delta_{0}\circ ID=id_{C_{0}}$;
\item $\delta_{0}\circ \Box=\delta_{0}\circ p_{0}$ and $\delta_{1}\circ \Box=\delta_{1}\circ p_{1}$;
\item $\Box\circ \lceil ID\circ \delta_{0},id_{C_{1}}\rceil=\Box\circ \lceil id_{C_{1}},ID\circ \delta_{1}\rceil=id_{C_{1}}$;
\item $\Box\circ \lceil \Box\circ p_{0},p_{1}\rceil=\Box\circ \lceil p_{0},\Box\circ p_{1}\rceil \circ \lceil p_{0}\circ p_{0},\lceil p_{1}\circ p_{0},p_{1}\rceil\rceil:(C_{1}\times_{\Box}C_{1})\times_{\Box} C_{1}\rightarrow C_{0}$,
\end{enumerate}
where we denote with $\lceil f,f'\rceil$ the unique arrows that exist for the definitions of pullback and where $(C_{1}\times_{\Box}C_{1})\times_{\Box}C_{1}$ is the pullback of $\delta_{1}\circ \Box$ and $\delta_{0}$.
\end{definition}
Before going to the next section we show a way to externalize internal categories. This is done in a very natural way by means of global elements.
\begin{proposition}
If ${\cal C}=(C_{0},C_{1},\delta_{0},\delta_{1},ID,\Box)$ is an internal category of $\mathbb{C}$, then the following is a category:
$$\Gamma({\cal C}):=(Hom(1,C_{0}),Hom(1,C_{1}),\delta_{0}\circ (-),\delta_{1}\circ (-),ID\circ (-),\Box\circ \lceil (-)_{1},(-)_{2}\rceil)$$
\end{proposition}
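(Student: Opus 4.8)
The plan is to verify the category axioms for $\Gamma({\cal C})$ by specializing each of the four internal axioms of the definition to global elements, the whole passage being mediated by one elementary observation: pairing into a pullback commutes with precomposition. Precisely, whenever $h\colon Y\to C_{1}$ and $k\colon Y\to C_{1}$ satisfy $\delta_{1}\circ h=\delta_{0}\circ k$ (so that $\lceil h,k\rceil\colon Y\to C_{1}\times_{\Box} C_{1}$ is defined) and $m\colon X\to Y$ is any arrow, then $\lceil h,k\rceil\circ m=\lceil h\circ m,k\circ m\rceil$; this is immediate from the uniqueness clause of the universal property of the pullback, since both sides have the same composites with $p_{0}$ and $p_{1}$. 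I would state and prove this first, because every subsequent step is an instance of substituting a global element into an internal identity and then moving the $\lceil-,-\rceil$ past the substituted arrow.

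Well-definedness of the structure is then nearly mechanical, and it is here that $1$ being terminal is used, turning composability into an equation of global elements of $C_{0}$. For $a\in Hom(1,C_{0})$, axiom 1 gives $\delta_{0}\circ(ID\circ a)=(\delta_{0}\circ ID)\circ a=a$ and likewise $\delta_{1}\circ(ID\circ a)=a$, so $ID\circ a$ is a morphism from $a$ to $a$. Given $f,g\in Hom(1,C_{1})$ composable in $\Gamma({\cal C})$, i.e.\ $\delta_{1}\circ f=\delta_{0}\circ g$ (target of $f$ equals source of $g$), the pair $(f,g)$ is exactly a cone over the cospan defining $C_{1}\times_{\Box} C_{1}$, so $\lceil f,g\rceil\colon 1\to C_{1}\times_{\Box} C_{1}$ exists and the composite $\Box\circ\lceil f,g\rceil$ lies in $Hom(1,C_{1})$. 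Its source and target are read off from axiom 2: $\delta_{0}\circ\Box\circ\lceil f,g\rceil=\delta_{0}\circ p_{0}\circ\lceil f,g\rceil=\delta_{0}\circ f$ and $\delta_{1}\circ\Box\circ\lceil f,g\rceil=\delta_{1}\circ p_{1}\circ\lceil f,g\rceil=\delta_{1}\circ g$, so the composite runs from the source of $f$ to the target of $g$, as required.

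For the unit laws I would specialize axiom 3 to a morphism $f\colon a\to b$ (so $\delta_{0}\circ f=a$ and $\delta_{1}\circ f=b$) by precomposing with $f$ and applying the pairing lemma. From $\Box\circ\lceil id_{C_{1}},ID\circ\delta_{1}\rceil=id_{C_{1}}$ one obtains $\Box\circ\lceil f,ID\circ b\rceil=f$, i.e.\ composing $f$ with the identity on its target returns $f$; from $\Box\circ\lceil ID\circ\delta_{0},id_{C_{1}}\rceil=id_{C_{1}}$ one obtains $\Box\circ\lceil ID\circ a,f\rceil=f$, i.e.\ composing the identity on the source of $f$ with $f$ returns $f$. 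Together these are the two unit laws.

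Associativity is the main obstacle, only because of the bookkeeping. Given three composable morphisms $f,g,h$ (so $\delta_{1}\circ f=\delta_{0}\circ g$ and $\delta_{1}\circ g=\delta_{0}\circ h$) I would build the global element $T=\lceil\lceil f,g\rceil,h\rceil\colon 1\to(C_{1}\times_{\Box}C_{1})\times_{\Box} C_{1}$, first checking that the compatibility $\delta_{1}\circ\Box\circ\lceil f,g\rceil=\delta_{1}\circ g=\delta_{0}\circ h$ holds so that $T$ is defined, and recording that $p_{0}\circ T=\lceil f,g\rceil$ and $p_{1}\circ T=h$. Precomposing axiom 4 (associativity of $\Box$) with $T$ and applying the pairing lemma repeatedly, the left-hand side collapses to $\Box\circ\lceil\Box\circ\lceil f,g\rceil,h\rceil$ and the right-hand side to $\Box\circ\lceil f,\Box\circ\lceil g,h\rceil\rceil$, whose equality is precisely axiom 4 evaluated at $T$. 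The one delicate point is checking that the reassociating arrow $\lceil p_{0}\circ p_{0},\lceil p_{1}\circ p_{0},p_{1}\rceil\rceil$ carries $T$ to $\lceil f,\lceil g,h\rceil\rceil$; I expect this identification of the iterated pullbacks, where the symbols $p_{0},p_{1}$ refer to projections of different pullbacks, to demand the most care, but it is again only a verification of projection-composites via the same lemma and introduces no new idea.
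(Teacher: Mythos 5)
Your proposal is correct and takes essentially the same approach as the paper: the paper's proof simply asserts that each point of the definition of category follows immediately from the corresponding point of the definition of internal category, which is precisely the specialization-to-global-elements argument you carry out in detail. Your explicit pairing lemma $\lceil h,k\rceil\circ m=\lceil h\circ m,k\circ m\rceil$ is exactly the (unstated) observation that makes the paper's ``straightforward'' verification go through, including the delicate bookkeeping for associativity.
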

\begin{proof}
Every point of the definition of category follows immediately because of the relative point in the definition of internal category. The proof is straightforward. 
\end{proof}

\section{The real category of sets: ${\cal SET}$}
We will now define an internal category of $\mathbb{ZF}$ (or equivalently of $\mathbb{DCL}(ZF)$), called ${\cal SET}$. This category is given by the following assignments:
\begin{enumerate}
\item ${\cal SET}_{0}:=\left\{x|x=x\right\}$;
\item ${\cal SET}_{1}:=\left\{F|\exists f \exists z(F=<f,z>\wedge Fun(f)\wedge ran(f)\subseteq z)\right\}$;
\item $\delta_{0}:=[\left\{F,x|\exists f \exists z(F=<f,z>\wedge Fun(f)\wedge ran(f)\subseteq z\wedge dom(f)=x)\right\}]_{\equiv}$;
\item $\delta_{1}:=[\left\{F,z|\exists f (F=<f,z>\wedge Fun(f)\wedge ran(f)\subseteq z)\right\}]_{\equiv}$;
\item $ID:=[\left\{x,F|\exists f(F=<f,x>\wedge \forall t(t\in f \leftrightarrow \exists s(s\in x \wedge t=<s,s>)))\right\}]_{\equiv}$
\item $$\Box:=[\{J,G|\exists f \exists f' \exists z \exists f''(J=<f,<f',z>>\wedge Fun(f)\wedge Fun(f')$$
$$\wedge ran(f)\subseteq dom(f')\wedge ran(f')\subseteq z\wedge G=<f'',z>\wedge $$
$$\wedge\forall t(t\in f'' \leftrightarrow (\exists s \exists s' \exists s''(<s,s'>\in f \wedge <s',s''>\in f' \wedge t=<s,s''>) ) ))\}]_{\equiv},$$ once we easily realized that the object of composable arrows is given by\\
$$\{J|\exists f \exists f' \exists z (J=<f,<f',z>>\wedge Fun(f)\wedge$$
$$\wedge Fun(f')\wedge ran(f)\subseteq dom(f')\wedge ran(f')\subseteq z)\}.$$
\end{enumerate}
We have that
\begin{theorem}
${\cal SET}$ is an internal category.
\end{theorem}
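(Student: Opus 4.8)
The plan is to verify the four defining equations of an internal category directly, exploiting the fact that in $\mathbb{ZF}$ (equivalently in $\mathbb{DCL}[ZF]$) an arrow is an equivalence class of a provably functional and total formula, and that two arrows with the same source and target are equal precisely when their graph-formulas are provably equivalent in $ZF$. Consequently each of the conditions 1--4 in the definition of internal category becomes, after the composites and the pullback projections are unwound, a biconditional to be derived in $ZF$; since all of them speak only about $dom$, $ran$, the diagonal (identity) relation and relational composition of functional sets, each collapses to an elementary theorem of $ZF$.

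First I would pin down the pullback ${\cal SET}_{1}\times_{\Box}{\cal SET}_{1}$. An arrow of ${\cal SET}$ is a pair $\langle f,z\rangle$ with $\delta_{0}(\langle f,z\rangle)=dom(f)$ and $\delta_{1}(\langle f,z\rangle)=z$, so two arrows $\langle f,a\rangle$, $\langle f',z\rangle$ are composable exactly when $a=dom(f')$. I would check that the displayed object of composable pairs, whose elements are the $\langle f,\langle f',z\rangle\rangle$ with $ran(f)\subseteq dom(f')$ and $ran(f')\subseteq z$, together with $p_{0}$ sending such a pair to $\langle f,dom(f')\rangle$ and $p_{1}$ sending it to $\langle f',z\rangle$, satisfies $\delta_{1}\circ p_{0}=\delta_{0}\circ p_{1}$ and the universal property. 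The point is simply that the suppressed codomain $a=dom(f')$ of the first factor is recoverable, so this economical encoding is isomorphic to the genuine fibre product; this identification is what licenses reading off $\Box$, $p_{0}$, $p_{1}$ on representatives in the remaining computations.

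Conditions 1--3 are then short. For condition~1, $ID$ sends $x$ to $\langle id_{x},x\rangle$ with $id_{x}$ the diagonal $\{\langle s,s\rangle : s\in x\}$, so $\delta_{0}\circ ID$ and $\delta_{1}\circ ID$ return $dom(id_{x})$ and $x$, which agree once $ZF$ proves $dom(id_{x})=x$. For condition~2, $\Box$ sends a composable pair to $\langle f'',z\rangle$ with $f''$ the relational composite; here $\delta_{0}\circ\Box$ yields $dom(f'')$ while $\delta_{0}\circ p_{0}$ yields $dom(f)$, and $ran(f)\subseteq dom(f')$ forces $dom(f'')=dom(f)$, whereas $\delta_{1}\circ\Box=\delta_{1}\circ p_{1}$ holds on the nose since both return the final codomain $z$. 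For condition~3 the two unit composites reduce to the $ZF$-identities $f\circ id_{dom(f)}=f$ and $id_{z}\circ f=f$ (the latter using $ran(f)\subseteq z$), each an immediate calculation with the graph of a relational composite, and in both cases the carried codomain is unchanged.

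The real work is condition~4, associativity, and this is where I expect the only genuine obstacle, though one of bookkeeping rather than of mathematics. The two sides are arrows out of the iterated pullback $({\cal SET}_{1}\times_{\Box}{\cal SET}_{1})\times_{\Box}{\cal SET}_{1}$ into ${\cal SET}_{1}$ (the codomain in the displayed equation should read $C_{1}$ rather than $C_{0}$), and the right-hand composite is precomposed with the reassociation arrow $\lceil p_{0}\circ p_{0},\lceil p_{1}\circ p_{0},p_{1}\rceil\rceil$ transporting a left-bracketed triple to the corresponding right-bracketed one. I would first check that this reassociation arrow is well defined, i.e. that its evident formula really lands in the correct nested pullback; the constraints $ran(f)\subseteq dom(f')$, $ran(f')\subseteq dom(f'')$, $ran(f'')\subseteq z$ imposed on the three component functions are the same under either bracketing, so the indexing aligns. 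Once it does, both sides compute the same triple composite, and the equation collapses to associativity of relational composition of functional sets, namely that the set of pairs $\langle s,s'''\rangle$ obtained by composing $f$ with $f'$ and then with $f''$ coincides with that obtained by composing $f$ with the composite of $f'$ and $f''$. This is a routine $ZF$ theorem, and since the final codomain $z$ is carried along unchanged in either order the two output pairs agree, completing the verification.
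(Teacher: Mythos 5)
Your proof is correct, and it is essentially the paper's approach in the only sense available: the paper's entire proof reads ``This is a straightforward proof,'' and what you have written is precisely the direct verification that claim waves away. Your unwinding of the economical encoding of the pullback of composable pairs, the reduction of conditions 1--3 to elementary $ZF$ facts about $dom$, $ran$ and relational composites, and the associativity check via the reassociation arrow (including your correct observation that the codomain $C_{0}$ in condition 4 is a typo for $C_{1}$) supply exactly the details the paper leaves implicit.
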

\begin{proof}
This is a straightforward proof.
\end{proof}
We have also that this is an internal topos, as every construction for a topos can be done in $\mathbb{ZF}$, as one can see (this is given by a well written formal proof of the fact that sets and functions form a topos).
\section{Global elements: names for sets}
Now we have an internal category ${\cal SET}$; we want to study the category $\Gamma({\cal SET})$. As follows directly from the definition we have that the objects of $\Gamma({\cal SET})$ are the equivalence classes $[\left\{x|P(x)\right\}]_{\equiv}$ of definable classes so that $\vdash_{ZF}\exists ! x P(x)$.
Arrows of $\Gamma({\cal SET})$ are classes of equivalence $[\left\{f|P(f)\right\}]_{\equiv}$ so that $\vdash_{ZF}\exists ! f P(f)$, and 
$$P(f)\vdash_{ZF}\exists f'\exists z(f=<f',z>\wedge Fun(f')\wedge ran(f')\subseteq z).$$
We have the following 
\begin{theorem}
$\mathbb{DST}(ZF)$ and $\Gamma({\cal SET})$ are equivalent.
\end{theorem}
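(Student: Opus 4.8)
The plan is to exhibit an explicit functor $\Phi\colon \mathbb{DST}(ZF)\to \Gamma(\mathcal{SET})$ and to prove it is full, faithful and essentially surjective. On objects I would send a definable set $\{x|P\}$ to the \emph{name} of the set collecting its elements, namely
$$\Phi(\{x|P\}):=[\{z\mid \forall x(x\in z\leftrightarrow P(x))\}]_{\equiv}.$$
Since $\{x|P\}$ is a definable set we have $\vdash_{ZF}\exists z\,\forall x(x\in z\leftrightarrow P(x))$, and axiom~1.\ (extensionality) gives uniqueness, so $\vdash_{ZF}\exists! z\,\forall x(x\in z\leftrightarrow P(x))$ and the right-hand side is a genuine object of $\Gamma(\mathcal{SET})$. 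On arrows I would send a morphism $[\{x,y|F\}]_{\equiv}\colon\{x|P\}\to\{y|Q\}$ to the name of the pair $<g,z>$ whose first component $g$ is the graph of the functional relation $F$ and whose second component $z$ is the target set, i.e.\ to
$$[\{f\mid \exists g\exists z(f=<g,z>\wedge Fun(g)\wedge \forall t(t\in g\leftrightarrow \exists x\exists y(t=<x,y>\wedge F))\wedge \forall s(s\in z\leftrightarrow Q[s/y]))\}]_{\equiv}.$$

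Next I would check well-definedness and functoriality. Conditions 1.--3.\ in the definition of an arrow of $\mathbb{ZF}$ say exactly that $F$ is a single-valued, total relation on the set $\{x|P\}$, so with separation and pairing the displayed $f$ is a provably unique pair $<g,z>$ with $Fun(g)$, $ran(g)\subseteq z$ and $dom(g)=\{x|P\}$; hence $\Phi$ lands among the arrows of $\Gamma(\mathcal{SET})$ and, since $\delta_{0}$ reads off $dom(g)$ and $\delta_{1}$ reads off $z$, it is indeed an arrow from $\Phi(\{x|P\})$ to $\Phi(\{y|Q\})$. Preservation of identities is immediate from the defining formula of $ID$, which produces precisely the diagonal graph on the set. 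Preservation of composition is where the two notions of composite must be compared: in $\mathbb{DST}(ZF)$ composition is inherited from $\mathbb{ZF}$ and is relational composition $\exists y'(F\wedge F')$, whereas in $\Gamma(\mathcal{SET})$ it is computed by the internal $\Box$, whose defining formula yields the graph $\{<s,s''>\mid\exists s'(<s,s'>\in g\wedge <s',s''>\in g')\}$. For functional relations these coincide, so $\Phi(F'\circ F)=\Box\circ\lceil\Phi(F),\Phi(F')\rceil$ follows once one checks that the source pair lands in $C_{1}\times_{\Box}C_{1}$, which it does because $\delta_{1}\circ\Phi(F)=\delta_{0}\circ\Phi(F')=\Phi(\{y|Q\})$.

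For faithfulness and fullness I would use that passing from a functional relation to its graph, and back via the relation $<x,y>\in g$, are mutually inverse provably in $ZF$. Faithfulness is then the remark that two arrows with the same graph satisfy $\vdash_{ZF}F\leftrightarrow F''$ and so are $\equiv$-equal; fullness is the remark that any arrow of $\Gamma(\mathcal{SET})$ from $\Phi(\{x|P\})$ to $\Phi(\{y|Q\})$ names a pair $<g,z>$ with $dom(g)$ the set of $P$ and $z$ the set of $Q$, hence equals $\Phi$ of the relation $<x,y>\in g$, whose verification of conditions 1.--3.\ uses $Fun(g)$ and $dom(g)=\{x|P\}$. Finally, essential surjectivity (indeed surjectivity on objects up to $\equiv$) holds because any object $[\{z|Q(z)\}]_{\equiv}$ with $\vdash_{ZF}\exists! z\,Q(z)$ equals $\Phi(\{x\mid \exists z(Q(z)\wedge x\in z)\})$: the displayed class is a definable set since the unique $z$ witnesses comprehension, and by extensionality the unique set whose members satisfy $\exists z(Q(z)\wedge x\in z)$ is exactly that $z$.

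The main obstacle I expect is not a single deep step but the bookkeeping across the two layers of the construction. One must keep track of the codomain datum $z$ carried by the arrows of $\Gamma(\mathcal{SET})$ (which is absent from the purely relational description in $\mathbb{DST}(ZF)$) and confirm that it is always pinned to the target set, so that $\delta_{0},\delta_{1}$ agree with source and target; and one must verify the composition identity by unwinding the $\Box$ formula together with the mediating arrow $\lceil(-)_{1},(-)_{2}\rceil$ of the $\Gamma$-construction and matching it, modulo $\equiv$, with relational composition. Both become routine once single-valuedness and totality of $F$ over $\{x|P\}$ are invoked, with extensionality used throughout to collapse provably equivalent formulas to the same $\equiv$-class.
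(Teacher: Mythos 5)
Your proposal is correct, and its core data are the same as the paper's: your functor $\Phi$ is literally the paper's functor $\textbf{P}$ (the name-of-the-extension assignment on objects, the graph-with-target-set assignment on arrows), and the back-translations you use --- $\{x_{0}|\exists z(Q(z)\wedge x_{0}\in z)\}$ for essential surjectivity and ``$<x,y>\in g$'' for fullness --- are exactly the object and arrow parts of the paper's second functor $\textbf{P'}$. What differs is how the equivalence is certified: the paper packages those back-translations into an explicit functor $\textbf{P'}\colon\Gamma({\cal SET})\rightarrow\mathbb{DST}[ZF]$ and verifies that $\textbf{P}\circ\textbf{P'}$ is the identity on the nose while $\textbf{P'}\circ\textbf{P}$ is naturally isomorphic to the identity via the arrows $[\{x,x'|P(x)\wedge x=x'\}]_{\equiv}$, whereas you verify that $\Phi$ is full, faithful and essentially surjective. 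Your route spares you checking that the inverse assignment is itself functorial and that the comparison arrows are natural; its usual cost is that the criterion ``fully faithful plus essentially surjective implies equivalence'' requires choosing isomorphisms to manufacture a quasi-inverse, but this is harmless here because, as you observe, your surjectivity on objects is strict and witnessed by an explicit formula, so the quasi-inverse is definable --- and writing it down reproduces the paper's $\textbf{P'}$ verbatim. The paper's route is slightly more informative in return: it displays the quasi-inverse and the natural isomorphism concretely and makes visible that the equivalence is strict in the $\Gamma({\cal SET})$ direction, which is the precise content of the paper's closing slogan that classes satisfying comprehension are exactly those to which one can give a name.
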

\begin{proof}
Consider the following functors:
\begin{enumerate}
\item $\textbf{P}:\mathbb{DST}(ZF)\rightarrow \Gamma({\cal SET})$ is given by\\
$\textbf{P}(\left\{x|P(x)\right\}):=[\left\{z|\forall x(x\in z\leftrightarrow P(x))\right\}]_{\equiv}$\\
$\textbf{P}([\left\{x,y|F(x,y)\right\}]_{\equiv}):=$\\
$:=[\{f'|\exists f \exists z(f'=<f,z>\wedge \forall t(t\in f \leftrightarrow \exists x \exists y(t=<x,y>\wedge F(x,y)))\wedge$\\
$\wedge \forall y(y\in z\leftrightarrow Q(y)))\}]_{\equiv}$\\
$\textbf{P}(\left\{y|Q(y)\right\}):=[\left\{z|\forall y(y\in z\leftrightarrow Q(y))\right\}]_{\equiv}$.\\
\item $\textbf{P'}:\Gamma({\cal SET})\rightarrow \mathbb{DST}[ZF]$ is given by\\
$\textbf{P'}([\left\{z|P(z)\right\}]_{\equiv}):=\left\{x_{0}|\exists z(P(z)\wedge x_{0}\in z)\right\}$\\
$\textbf{P'}([\left\{f'|Q(f')\right\}]_{\equiv}):=[\left\{x,x'|\exists f'(Q(f')\wedge \exists z\exists f(f'=<f,z>\wedge <x,x'>\in f))\right\}]_{\equiv}$\\
$\textbf{P'}([\left\{z|P'(z)\right\}]_{\equiv}):=\left\{x_{0}|\exists z(P'(z)\wedge x_{0}\in z)\right\}$,\\
where $x_{0}$ is a fixed variable (we can think of it as the first variable if we consider variables of $ZF$ to be presented in a countable list).
\end{enumerate}
It is immediate to see that $\textbf{P}\circ \textbf{P'}$ is the identity functor for $\Gamma({\cal SET})$, while there is an natural isomorphism from the identity functor of $\mathbb{DCL}[ZF]$ to $\textbf{P'}\circ \textbf{P}$, that is given by the arrows
$$[\left\{x,x'|P(x)\wedge x=x'\right\}]_{\equiv}:\left\{x|P(x)\right\}\rightarrow \left\{x_{0}|\exists z(x_{0}\in z \wedge \forall x(x\in z \leftrightarrow P(x)))\right\}$$
\end{proof}
\section{Final remarks}
In our attempt to clarify the relation between (formal) classes and sets, between metamathematics and mathematics, by means of a unique mathematical structure, we started by introducing the syntactical category $\mathbb{ZF}$ (that we proved to be equivalent to the category of definable classes of $ZF$).
This category corresponds to the metamathematical level: its objects are classes as are usually introduced in the set theorists' practice. Moreover this category has a full subcategory of some importance: the category of definable sets, that is the category whose objects are those definable classes $\left\{x|P\right\}$ for which 
$$\vdash_{ZF}\exists z \forall x(x\in z\leftrightarrow P).$$ 
This is the naive category of sets. Obviously this is \emph{not} the real category of sets. The \emph{real} category of sets is ${\cal SET}$. But this is not a category: it is an \emph{internal category} in $\mathbb{ZF}$.
The relation between metamathematics and mathematics is exactly the relation between categories and internal categories. Mathematical concepts are represented through internal categories, external (or metamathematical) concepts are expressed at the categorical level. 
The most interesting result shown in the previous sections is that showing the equivalence of the two more natural ways to give an external account of the notion of set. We proved that the category of definable sets is equivalent to the category obtained by global sections on ${\cal SET}$: \emph{classes that satisfy comprehension axiom are exactly those to which I can give a name}.

\bibliographystyle{plain}

\end{document}